\newtheorem{theorem}{Theorem}[section]
\newtheorem{lemma}[theorem]{Lemma}
\theoremstyle{definition}
\newtheorem{notation}[theorem]{Notation}
\begin{document}
	
\title[ON TWO CONJECTURES ABOUT THE SUM OF ELEMENT ORDERS]
{On Two Conjectures about the Sum of Element Orders}	
%\title[SOLVABILITY BY SUM OF ELEMENT ORDERS]{SOLVABILITY BY SUM OF ELEMENT ORDERS}
\author[  M. Baniasad, B. Khosravi ]{ Morteza Baniasad Azad \& Behrooz Khosravi }
\address{ Dept. of Pure  Math.,  Faculty  of Math. and Computer Sci. \\
Amirkabir University of Technology (Tehran Polytechnic)\\ 424,
Hafez Ave., Tehran 15914, Iran \newline }
\email{ baniasad84@gmail.com}
\email{ khosravibbb@yahoo.com}

\thanks{}
\subjclass[2000]{20D60, 20F16}

\keywords{Sum of element orders, supersolvable group, element orders.}

\begin{abstract}
		Let $G$ be a finite group and  $\psi(G) = \sum_{g \in G} o(g)$, where $o(g)$ denotes
		the order of $g \in G$.
		
		First, we prove that 
		if $G$ is a group of order $n$ and $\psi(G) >31\psi(C_n)/77$, where $C_n$ is the cyclic group of order $n$, then $G$ is supersolvable.
	     This proves a conjecture of M.~{T\u{a}rn\u{a}uceanu}. 
	     
       Moreover, 	     M.~{Herzog}, P.~{Longobardi} and  M.~{Maj}
	     put forward the following conjecture:
	     If $H\leq G$,
	     then $\psi(G) \leqslant \psi(H) |G:H|^2$.
	      In the sequel,
	      by an example we show that this conjecture is not satisfied in general.
\end{abstract}

\maketitle
\section{\bf Introduction}
In this paper all groups are finite. The cyclic group of order $n$ is denoted by $C_n$.  Let $ \psi(G) = \sum_{g \in G} o(g)$,  the sum of element orders in a  group $G$.
The function $\psi(G)$ was introduced  by Amiri, Jafarian
and  Isaacs \cite{amiri2009sums}.

Many authors try to get some relations between the structure of the group 
$G$ and $\psi(G)$
(see \cite{herzogsurvey,herzogComm,herzogNew,Jafarian}).
 Recently it is proved that if 
$\psi(G)> \frac{211}{1617} \psi(C_n)$, where $G$ is a finite group of order $n$, then $G$ is solvable \cite{Baniasad,herzog2018two}.

From the observation that $A_4$ satisfies $\psi(A_4) =31$ and $\psi(C_{12}) =77$, in \cite{TzbMATH06233336} M.~{T\u{a}rn\u{a}uceanu} put forward the following conjecture:
\\
\\
\textbf{[T]-Conjecture} \cite[Conjecture 1.5]{TzbMATH06233336} \textit{If $G$ is a group of order $n$ and
	$\psi(G) > \frac{31}{77} \psi(C_n)$,
	then $G$ is supersolvable.}
\\

 First, in this paper we prove the validity of this conjecture.
Nevertheless, for groups of
odd order, it is possible to prove a stronger result:
\begin{theorem} \label{oddsuper}
	Suppose that $n=|G|$ is odd.
	If $\psi(G) > \frac{271}{3647} \psi(C_n)$, then $G$ is supersolvable.
\end{theorem}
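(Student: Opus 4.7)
The plan is to follow the line of argument used for the [T]-Conjecture, but with the extremal example $A_4$ replaced by the smallest non-supersolvable group of odd order. This is the Frobenius group $F := (C_5 \times C_5) \rtimes C_3$ of order $75$, in which the generator of $C_3$ acts on $\mathbb{F}_5^2$ with irreducible minimal polynomial $x^2+x+1$. A direct enumeration of element orders yields $\psi(F) = 1 + 24\cdot 5 + 50\cdot 3 = 271$, while $\psi(C_{75}) = \sum_{d \mid 75} d\,\varphi(d) = 3647$, so the constant $271/3647$ in the statement is sharp and is actually attained on $F$.

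I would argue by induction on $n=|G|$. Suppose $G$ is a counterexample of minimal order. By the Feit--Thompson theorem $G$ is solvable, hence admits a minimal normal subgroup $N$ which is elementary abelian of order $p^k$ for some odd prime $p$. If $k=1$, then by the minimality of $|G|$ the quotient $G/N$ is supersolvable; combined with $N$ being cyclic of prime order, this immediately yields a supersolvable chief series for $G$, contradicting the choice of $G$. Therefore $k\ge 2$, and $G$ has a non-cyclic chief factor.

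From here I would replicate the reduction scheme used in the proof of the [T]-Conjecture. The inductive hypothesis, applied to every proper subgroup and every proper quotient of $G$, together with the Amiri--Jafarian--Isaacs bound $\psi(H)\le \psi(C_{|H|})$, should reduce $G$ to the case of a group with a unique minimal normal subgroup $N\cong (C_p)^k$ with $k\ge 2$, on which $G/N$ acts faithfully and irreducibly. A numerical optimization over the triples $(p,k,|G/N|)$ should then single out the maximizer of $\psi(G)/\psi(C_n)$ as exactly $(5,2,3)$, namely $G\cong F$.

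The main obstacle is this final numerical optimization. One must show quantitatively that each of (i) increasing the prime $p$, (ii) increasing the exponent $k$, or (iii) enlarging the supersolvable quotient $G/N$, strictly decreases the ratio $\psi(G)/\psi(C_n)$ away from the extremal value $271/3647$. Restricting to odd $p$ is precisely what rules out the small-prime configurations (notably $A_4$ itself) responsible for the weaker universal threshold $31/77$ of the [T]-Conjecture, and this is the structural reason the odd-order bound can be tightened all the way to $271/3647$.
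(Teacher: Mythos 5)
Your identification of the extremal configuration is correct: $F=(C_5\times C_5)\rtimes C_3$ has $\psi(F)=1+24\cdot 5+50\cdot 3=271$ and $\psi(C_{75})=3647$, so the constant is sharp. But the proof itself has two genuine gaps. First, the induction scheme does not close. In your $k=1$ step you invoke minimality of $|G|$ to conclude that $G/N$ is supersolvable, but for that you must first verify that $G/N$ satisfies the hypothesis $\psi(G/N)>\frac{271}{3647}\psi(C_{|G/N|})$, and this does not follow from what you have: Lemma~\ref{prop} gives $\psi(G/N)\geq \psi(G)/p^{2}>\frac{271}{3647}\psi(C_n)/p^{2}$, while $\psi(C_n)/\psi(C_{n/p})=\bigl(p^{2a+1}+1\bigr)/\bigl(p^{2a-1}+1\bigr)<p^{2}$, so the inequality transfers in the wrong direction. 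This is exactly why the paper inducts on $|\pi(G)|$ rather than on $|G|$, descending only along a normal cyclic \emph{Sylow} subgroup $Q$: there coprimality gives the exact factorization $\psi(C_n)=\psi(C_{|Q|})\psi(C_{|G/Q|})$, and Lemma~\ref{sumsylow} gives $\psi(G)\leq\psi(C_{|Q|})\psi(G/Q)$, so the ratio passes cleanly to $G/Q$. Your reduction to a unique minimal normal subgroup with faithful irreducible action is likewise asserted, not derived.

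Second, and more seriously, the entire quantitative core of the argument is missing: the sentence ``a numerical optimization over the triples $(p,k,|G/N|)$ should then single out the maximizer'' is precisely the theorem, not a proof of it. The paper's actual mechanism is: an odd-order analogue of the Herzog--Longobardi--Maj estimate (Lemma~\ref{herzoggg}) shows that if the largest prime $p\geq 37$ then $G$ contains a cyclic subgroup of index $<p$, which Lemma~\ref{2p} turns into a normal cyclic Sylow $p$-subgroup (already excluded); this confines $\pi(G)$ to $\{3,5,\dots,31\}$. Then for each possible largest prime, Lemma~\ref{aval} produces a cyclic subgroup of controlled index, Lemma~\ref{2p} pins that index down, Lucchini's theorem (Lemma~\ref{lucchini}) bounds the core, and the resulting small quotients $G/\mathrm{core}_G(\langle x\rangle)$ are handled by explicit $\psi$-computations for the non-supersolvable groups of orders $75$, $225$ and $375$ together with Lemma~\ref{prop}. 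None of these steps appears in your proposal, so as written it is an outline of a strategy (and one whose induction step fails) rather than a proof.
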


%\begin{conjecture}	content...\end{conjecture}

On the other hand, in \cite{herzog2018two} there exist three conjectures about $\psi(G)$.
The validity of Conjecture 5 was proved in \cite{Baniasad}
and
the validity of Conjecture 6 was proved in \cite{Bahri}.
Moreover the following conjecture was posed in that paper.
\\
\\
\textbf{[HLM]-Conjecture} \cite[Conjecture 7]{herzog2018two} \textit{If $H\leq G$,
	then $\psi(G) \leqslant \psi(H) |G:H|^2$.}
\\

Finally, in the sequel of this paper we give an example which shows that this conjecture is not satisfied in general and it seems that we need extra conditions on $G$ or $H$ to have this result.

 For the proof of these results, we need the following
 lemmas.
\begin{lemma}\cite[Corollary B]{amiri2009sums} \label{sumsylow}
	Let $P \in {\rm Syl}_p(G),$ and assume that $P \unlhd G$ and that $P$ is cyclic. Then
	$\psi(G) \leq \psi(P)\psi(G/P)$, with equality if and only if $P$ is central in $G$.
\end{lemma}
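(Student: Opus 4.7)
The plan is to leverage the coprimality $\gcd(|P|,|G/P|) = 1$ to split every element of $G$ into its $p$-part and $p'$-part, and then reorganize $\psi(G)$ as a weighted sum indexed by $G/P$. Since $P$ is a normal Sylow $p$-subgroup with index coprime to $p$, every $p$-element of $G$ lies in $P$, so each $g \in G$ has a unique decomposition $g = xy = yx$ with $x \in P$ the $p$-part and $y$ a $p'$-element, and $o(g) = o(x)\,o(y)$. Writing $Y$ for the set of $p'$-elements of $G$ and grouping by $y$ gives
\[ \psi(G) = \sum_{y \in Y} o(y) \sum_{x \in C_P(y)} o(x) = \sum_{y \in Y} o(y)\,\psi(C_P(y)). \]

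Next I would reindex this sum by the image $w \in G/P$ of $y$. Applying the Schur--Zassenhaus theorem to the preimage of $\langle w \rangle$ in $G$ (in which $P$ is still a normal Hall subgroup with cyclic quotient), the $p'$-lifts of $w$ form a single $P$-conjugacy class and all have order $o(w)$. Since $P$ is abelian, the centralizer in $P$ is the same for every such lift; denote it $C_P(w)$, so that the class has $|P|/|C_P(w)|$ elements. This yields the key identity
\[ \psi(G) = \sum_{w \in G/P} o(w)\,\psi(C_P(w))\,\frac{|P|}{|C_P(w)|}. \]

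The main technical step is the cyclic $p$-group inequality
\[ \psi(Q)\cdot |P:Q| \le \psi(P), \qquad Q \le P, \]
with equality iff $Q = P$. Writing $|P| = p^a$ and $|Q| = p^b$ and using the closed form $\psi(C_{p^k}) = (p^{2k+1}+1)/(p+1)$, this reduces after clearing denominators to $(p^{a+b+1}-1)(p^{a-b}-1) \ge 0$, which is immediate and strict unless $a = b$. Plugging this into the identity above termwise and using $\sum_{w \in G/P} o(w) = \psi(G/P)$ gives $\psi(G) \le \psi(P)\,\psi(G/P)$; equality forces $C_P(w) = P$ for every $w \in G/P$, which is exactly the statement $P \le Z(G)$.

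The main obstacle is the Schur--Zassenhaus bookkeeping in the second step: one must argue that for each $w \in G/P$ the set of $p'$-lifts is a single $P$-orbit with a well-defined common centralizer in $P$. This is where both the cyclicity (hence abelianness) of $P$ and the classical result that all complements in a normal Hall extension are conjugate enter; once that orbit structure is established the remainder of the argument is a combination of the elementary $p$-group inequality and linearity of the sum $\sum o(w) = \psi(G/P)$.
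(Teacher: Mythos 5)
The paper does not prove this lemma at all: it is quoted verbatim as Corollary B of Amiri--Jafarian Amiri--Isaacs, so there is no internal proof to compare against. Your argument is a correct, self-contained proof. The decomposition $g=xy$ into commuting $p$- and $p'$-parts with $x\in P$ (valid because $P$ is the unique Sylow $p$-subgroup) and the resulting identity $\psi(G)=\sum_{y}o(y)\psi(C_P(y))$ are sound, since for fixed $p'$-element $y$ the elements with $p'$-part $y$ are exactly $\{xy: x\in C_P(y)\}$. The reindexing over $w\in G/P$ also works: every $p'$-lift $y$ of $w$ has $y^{o(w)}\in P$ a $p'$-element, hence $o(y)=o(w)$; each such $y$ generates a complement to $P$ in the preimage of $\langle w\rangle$, all complements are $P$-conjugate (conjugacy part of Schur--Zassenhaus, elementary here since $P$ is abelian), and within a cyclic complement only the element mapping to $w$ itself is a lift of $w$, so the lifts form one $P$-orbit of size $|P|/|C_P(w)|$ with a well-defined centralizer. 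The inequality $\psi(C_{p^b})\,p^{a-b}\le\psi(C_{p^a})$ reduces, as you say, to $(p^{a+b+1}-1)(p^{a-b}-1)\ge 0$ with equality iff $a=b$, and the equality case $C_P(w)=P$ for all $w$ does give $P\le Z(G)$ because the $p$-parts already centralize the abelian group $P$. This is essentially the same coset-by-coset strategy as the original source (which bounds $\sum_{g\in Pt}o(g)$ by $\psi(P)\,o(Pt)$ for each coset), organized through $p'$-parts and $P$-orbits rather than raw cosets; your version makes the equality analysis particularly transparent.
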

\begin{lemma}
	\cite[Proposition 2.6]{herzog2018two} \label{prop}
	Let $H$ be a normal subgroup of the finite group $G$. Then
	$\psi(G) \leq \psi(G/H)|H|^2$.
\end{lemma}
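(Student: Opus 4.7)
The plan is to decompose $G$ as a disjoint union of the $[G:H]$ cosets of $H$ and bound the contribution $\sum_{x\in Hg} o(x)$ of each coset to $\psi(G)$ separately. On each coset $Hg$, I aim to show that every element has order at most $o(Hg)\cdot|H|$, where $o(Hg)$ denotes the order of the image of $g$ in the quotient $G/H$.

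To establish this coset-level bound, fix $h\in H$ and set $m=o(Hg)$ and $k=o(hg)$. Reducing the identity $(hg)^k=1$ modulo $H$ gives $(Hg)^k=1$ in $G/H$, so $m\mid k$. Writing $k=m\ell$, the element $(hg)^m$ lies in $H$ (its image in $G/H$ is trivial) and has order exactly $\ell$ there; Lagrange's theorem applied inside $H$ then forces $\ell\mid|H|$, so $o(hg)=m\ell\leq m|H|$. Summing this bound over the $|H|$ elements of the coset yields
$$\sum_{x\in Hg} o(x) \;\leq\; m\cdot|H|^2 \;=\; o(Hg)\cdot|H|^2.$$

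Summing over the cosets collapses to
$$\psi(G)\;=\;\sum_{Hg\in G/H}\sum_{h\in H} o(hg)\;\leq\;|H|^2\sum_{Hg\in G/H} o(Hg)\;=\;|H|^2\,\psi(G/H),$$
which is the desired conclusion.

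The only non-routine ingredient is the coset-level inequality $o(hg)\leq o(Hg)\cdot|H|$, and even this reduces to the two-line observation above; normality of $H$ is used implicitly to guarantee that $G/H$ is a group and that the reduction map $G\to G/H$ is a well-defined homomorphism. Everything after that is a double sum, so I do not expect any real obstacle.
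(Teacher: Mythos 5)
Your proof is correct, and it is essentially the standard argument behind the cited Proposition 2.6 of Herzog--Longobardi--Maj (the paper itself only quotes the result): the whole content is the elementwise bound $o(x)\leq o(xH)\,|H|$, which you verify correctly via $m\mid k$ and $o\bigl((hg)^m\bigr)=\ell\mid |H|$, followed by summing over cosets. No gaps.
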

\begin{lemma}\cite[Lemma 2.1]{Amiri2011zbMATH05906990} \label{sumdirect}
	If $G$ and $H$ are finite groups, then $\psi(G \times H) \leq \psi(G)\psi(H)$. Also,
	$\psi(G \times H) = \psi(G)\psi(H)$ if and only if $\gcd(|G|, |H|) = 1$.
\end{lemma}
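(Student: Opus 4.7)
The plan is to compute $\psi(G \times H)$ element-by-element and compare with $\psi(G)\psi(H)$ via a single elementary number-theoretic inequality: for positive integers one has $\mathrm{lcm}(a,b) \leq ab$, with equality precisely when $\gcd(a,b) = 1$. The bridge to the group-theoretic setting is the standard fact that in a direct product, $o((g,h)) = \mathrm{lcm}(o(g), o(h))$ for every $(g,h) \in G \times H$.

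With these in hand, I would write
\[
\psi(G \times H) \;=\; \sum_{(g,h) \in G \times H} \mathrm{lcm}(o(g), o(h)) \;\leq\; \sum_{g \in G}\sum_{h \in H} o(g)\, o(h) \;=\; \psi(G)\, \psi(H),
\]
obtained simply by summing the termwise bound. This already yields the inequality asserted in the lemma.

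For the equality clause, one direction is essentially free: if $\gcd(|G|,|H|) = 1$, then Lagrange's theorem forces $o(g) \mid |G|$ and $o(h) \mid |H|$ to be coprime for every pair, so the termwise inequality is an equality for every $(g,h)$ and summing preserves equality. The only genuine (if still mild) obstacle is the converse. Here I would argue by contrapositive: if some prime $p$ divides both $|G|$ and $|H|$, then Cauchy's theorem supplies $g \in G$ and $h \in H$ each of order $p$, and for this particular pair $\mathrm{lcm}(p,p) = p < p^2 = o(g)\, o(h)$. Since every other term obeys the weak inequality, the overall sum bound becomes strict, so equality in the lemma forces $\gcd(|G|,|H|) = 1$.
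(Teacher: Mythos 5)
Your proof is correct and is essentially the standard argument: the paper only cites this lemma from Amiri--Jafarian Amiri without reproducing a proof, and the cited source's proof proceeds exactly as you do, via $o((g,h))=\mathrm{lcm}(o(g),o(h))\leq o(g)o(h)$ with equality iff the orders are coprime, plus Lagrange for one direction and Cauchy for the converse. Nothing is missing.
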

\begin{lemma}\cite[Proposition 2.5]{herzog2017exact} \label{2p}
Let $G$ be a finite group and suppose that there exists $x\in G$ such that $|G:\langle x\rangle|<2p$,
where $p$ is the maximal prime divisor of $|G|$. Then one of the following holds:

(i) $G$ has a normal cyclic Sylow $p$-subgroup,

(ii) $G$ is solvable and $\langle x\rangle$
is a maximal subgroup of $G$ of index either $p$ or $p+1$.
\end{lemma}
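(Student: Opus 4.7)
Set $H=\langle x\rangle$, $m=|G:H|<2p$, and let $P\in\mathrm{Syl}_p(G)$. The left-coset action of $G$ on $G/H$ yields a homomorphism $\varphi\colon G\to S_m$ whose kernel $K=\mathrm{Core}_G(H)$ lies in $H$ and is therefore cyclic. This single observation drives the whole argument: $K$ is a cyclic normal subgroup of $G$, and the image $G/K$ is a transitive subgroup of $S_m$.

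I would first dispose of $m<p$: here $p\nmid|S_m|$ forces $P\le K$, so $P$ is the unique (characteristic) Sylow $p$-subgroup of the cyclic normal subgroup $K$, giving $P\unlhd G$ cyclic and conclusion (i). In the range $p\le m\le 2p-1$ with $p\nmid m$, $|P|\mid|H|$ lets us take $P\le H$, and abelianness of $H$ gives $H\le N_G(P)$, so $n_p\le m<2p$; combined with $n_p\equiv 1\pmod p$, this forces $n_p\in\{1,p+1\}$. For $n_p=1$ we again obtain (i); for $n_p=p+1$, the relation $m=(p+1)\cdot|N_G(P):H|$ together with $m<2(p+1)$ forces $m=p+1$ and $H=N_G(P)$. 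Together with the still-open possibility $m=p$, this leaves exactly two residual configurations: (a) $m=p+1$ with $H=N_G(P)$, and (b) $m=p$.

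For the \emph{maximality} of $H$ in (ii): in (b), $m=p$ is prime so $H$ is automatically maximal; in (a), any $H<M<G$ satisfies $P\le H\le M$ and $N_M(P)=M\cap N_G(P)=H$, so the number of Sylow $p$-subgroups of $M$ equals $|M:H|$, which must divide $p+1$ and be $\equiv 1\pmod p$; this forces $|M:H|\in\{1,p+1\}$, i.e. $M\in\{H,G\}$. For the \emph{solvability} of $G$, each residual case provides a faithful transitive action of a quotient $\overline G=G/L$ (with $L\le H$ cyclic) on $p$ or $p+1$ points, with cyclic point stabilizer $H/L$. In case (b), a transitive subgroup of $S_p$ with cyclic stabilizer is either contained in $\mathrm{AGL}_1(p)$ (solvable) or contains $A_p$; the latter is excluded for $p\ge 5$ because the stabilizer $A_{p-1}$ is not cyclic, and the small cases $p\in\{2,3\}$ are trivial. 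In case (a), an analogous inspection of transitive subgroups of $S_{p+1}$ with cyclic stabilizer excludes $A_{p+1}$, $S_{p+1}$, $\mathrm{PSL}_2(p)$, $\mathrm{PGL}_2(p)$ and their relatives (each has a non-cyclic point stabilizer for $p\ge 3$), leaving only solvable possibilities. Since $L$ is cyclic and $\overline G$ solvable, $G$ is solvable.

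The main obstacle is the last classification step, especially in case (a): one must enumerate transitive subgroups of $S_{p+1}$ with cyclic point stabilizer and verify each is solvable. Burnside's theorem on transitive groups of prime degree handles (b) cleanly, but the degree-$(p+1)$ analogue requires either a careful Sylow analysis inside the socle of $\overline G$ or an appeal to the structure of $2$-transitive groups of small degree — and this is where the cited source concentrates its technical effort.
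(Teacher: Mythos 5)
This lemma is quoted from \cite[Proposition 2.5]{herzog2017exact} and the paper contains no proof of it, so there is nothing internal to compare against; I can only judge your reconstruction on its own terms. Up to a point it is sound: the coset action on $G/\langle x\rangle$ correctly disposes of $m<p$, the Sylow count in the range $p<m<2p$ correctly leaves only $n_p\in\{1,p+1\}$ with $m=p+1$ and $\langle x\rangle=N_G(P)$ in the second case, and both maximality arguments (index $p$ is prime; counting Sylow $p$-subgroups of an intermediate $M$ when the index is $p+1$) are correct.

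The genuine gap is the solvability step, exactly where you flag it. For degree $p$ you misstate Burnside's dichotomy: a transitive group of prime degree is either solvable (inside $\mathrm{AGL}_1(p)$) or \emph{$2$-transitive} --- it need not contain $A_p$ --- and the nonsolvable $2$-transitive groups of prime degree include $\mathrm{PSL}_d(q)$, $\mathrm{PSL}_2(11)$, $M_{11}$, $M_{23}$, a list whose completeness rests on the classification of finite simple groups; checking that $A_{p-1}$ is not cyclic does not dispose of it. For degree $p+1$ the situation is worse: transitive (not necessarily primitive) subgroups of $S_{p+1}$ with cyclic point stabilizer are not a classified family one can ``inspect,'' so excluding ``$A_{p+1}$, $S_{p+1}$, $\mathrm{PSL}_2(p)$, $\mathrm{PGL}_2(p)$ and their relatives'' proves nothing. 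The missing ingredient is Lucchini's theorem, recorded in this paper as Lemma \ref{lucchini}: with $K=\mathrm{core}_G(\langle x\rangle)$ it gives $|\langle x\rangle:K|<|G:\langle x\rangle|=m$, hence $|G/K|<m^2$, and this order bound replaces your classification step entirely. If $p\nmid|G/K|$ then $P\le K$ and (i) holds. Otherwise, for $m=p$ one gets $|G/K|=pt$ with $t<p$, so $n_p(G/K)=1$ and $G/K$ is an extension of $C_p$ by a cyclic group of order $t$, hence metacyclic and $G$ is solvable, with no appeal to Burnside's degree-$p$ theorem. For $m=p+1$ the divisibility $p(p+1)\mid|G/K|<(p+1)^2$ forces $|G/K|=p(p+1)$ with self-normalizing Sylow $p$-subgroup $\langle x\rangle/K\cong C_p$; Burnside's normal $p$-complement theorem yields a normal complement $N$ of order $p+1$ on which $C_p$ acts fixed-point-freely, so $N$ is a nilpotent Frobenius kernel and $G$ is again solvable. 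Without some such bound on $|G/K|$, your argument cannot be completed as written.
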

\begin{lemma}\cite[Theorem 2.20]{isaa} (Lucchini) \label{lucchini}
	Let $A$ be a cyclic proper subgroup of a finite
	group $G$, and let $K={\rm core}_G(A)$. Then $| A : K | < | G : A |$, and in particular,
	if $|A| > |G : A|$ , then $K>1$.
\end{lemma}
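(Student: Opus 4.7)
The plan is to prove Lucchini's lemma by induction on $|G|$, first reducing to the core-free case. Set $K = {\rm core}_G(A)$. If $K > 1$, then $A/K$ is a cyclic proper subgroup of $G/K$ whose core in $G/K$ is trivial, and the inductive hypothesis applied to the pair $(G/K, A/K)$ yields $|A/K| < [G/K : A/K]$, which is exactly $|A:K| < [G:A]$. So it suffices to treat the case $K = 1$, where the statement reduces to $|A| < [G:A]$.

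For the core-free case, I would study the faithful left action of $G$ on $\Omega = G/A$. Write $n = |A|$ and $m = [G:A]$, and fix a generator $a$ of $A$. The orbits of $\langle a \rangle$ on $\Omega$ have lengths $n/d_i$, where $d_i = |A \cap g_i A g_i^{-1}|$ for a set of orbit representatives $g_i A$. The coset $A$ itself is a fixed point (contributing $d_i = n$), and because $A$ is cyclic and the pointwise stabilizer of $\Omega$ in $A$ equals $A \cap K = 1$, the divisors $d_i$ of $n$ must satisfy $\gcd_i d_i = 1$. Summing orbit lengths, $m = \sum_i n/d_i$, so the desired inequality becomes $\sum_i 1/d_i > 1$.

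The main obstacle is that the combinatorial data alone do not force this inequality: the divisors $\{1,2,3,6\}$ of $6$, for example, have gcd $1$ but $\sum 1/d_i = 1$. So one must exploit more than just the orbit lengths, and the standard device is to bring in a minimal normal subgroup $N$ of $G$; since $K = 1$, we have $N \not\leq A$. If $AN = G$, then $[G:A] = |N|/|N \cap A|$, and the faithful conjugation action of $A$ on $N$ (through representation-theoretic bounds for cyclic subgroups of $GL(N)$ when $N$ is elementary abelian, with a Clifford-style reduction for non-abelian $N$) forces $|A| < |N|/|N \cap A|$. If instead $AN < G$, one applies induction inside the proper subgroup $AN$ and then pulls the bound up using a coset count on $G/AN$. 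The delicate bookkeeping in this second sub-case, together with the treatment of non-abelian minimal normal subgroups, is where the real work of the proof lies.
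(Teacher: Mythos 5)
The paper offers no proof of this lemma at all: it is quoted verbatim from Isaacs's book (Theorem 2.20), so the only question is whether your argument stands on its own. Your reduction to the core-free case is fine: if $K>1$ then $\mathrm{core}_{G/K}(A/K)=K/K=1$, induction on $|G|$ applied to the pair $(G/K,A/K)$ gives $|A:K|=|A/K|<[G/K:A/K]=[G:A]$, and the ``in particular'' clause follows trivially. But the core-free statement $|A|<[G:A]$ is the entire content of the theorem, and it is not actually proved in your proposal. Your first attempt (orbit lengths of $\langle a\rangle$ on $G/A$ together with the condition $\gcd_i d_i=1$) you yourself concede is insufficient, as your $\{1,2,3,6\}$ example shows.

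The fallback sketch via a minimal normal subgroup $N$ leaves precisely the essential steps unproved. In the case $AN=G$, the asserted faithfulness of the conjugation action of $A$ on $N$ is true but needs an argument (the kernel $A\cap C_G(N)$ is centralized by $N$ and normalized by $A$, hence normal in $AN=G$, hence contained in the trivial core), and the claim that ``representation-theoretic bounds'' force $|A|<|N:N\cap A|$ is not a proof: it amounts to Horosevskii-type bounds on orders of automorphisms (a cyclic group acting faithfully on $N>1$ has order less than $|N|$) together with a separate argument controlling $N\cap A$ (easy when $N$ is abelian, completely unaddressed when $N$ is a product of nonabelian simple groups, where ``Clifford-style reduction'' is only a phrase). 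In the case $AN<G$, induction inside $AN$ yields only $|A:\mathrm{core}_{AN}(A)|<[AN:A]$, and $\mathrm{core}_{AN}(A)$ may be nontrivial even though $\mathrm{core}_G(A)=1$; transferring the inequality from $AN$ back to $G$ is exactly where the real argument lies, and a ``coset count on $G/AN$'' does not supply it. Since you explicitly defer these points (``the real work of the proof lies'' there), what you have is an outline of a plausible strategy, not a proof of the lemma.
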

%%%%%%%%%%%%%%%%%%%%%%%%%%
\begin{lemma}\cite[Lemma 2.1]{Baniasad}\label{aval}
	Let $G$ be a group of order $n={p_1}^{\alpha_1}\cdots{p_k}^{\alpha_k}$, where ${p_1}, \cdots,{p_k}$ are distinct primes. Let $\psi(G)>\dfrac{r}{s} \psi(C_n)$, for some integers $r, s$. Then  there exists a cyclic subgroup $ \langle{x}\rangle$ such that
	\[ [G:\langle{x}\rangle]< \dfrac{s}{r} \cdot \dfrac{p_1+1}{p_1}\cdots \dfrac{p_k+1}{p_k}. \]
\end{lemma}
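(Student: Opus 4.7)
The plan is to obtain the claimed bound by comparing two estimates: an easy upper bound on $\psi(G)$ coming from an element of maximum order, and an explicit lower bound on $\psi(C_n)$ coming from the multiplicativity of $\psi$ on coprime factors.

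First, I would pick $x\in G$ of maximum order. Since every $g\in G$ satisfies $o(g)\leq o(x)$, we get immediately
\[
\psi(G)=\sum_{g\in G} o(g)\;\leq\; n\cdot o(x)\;=\;n\cdot |\langle x\rangle|\;=\;\frac{n^2}{[G:\langle x\rangle]}.
\]
This is the only place in the argument where the internal structure of $G$ appears; everything that follows is a statement purely about $C_n$.

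For the lower bound on $\psi(C_n)$, I would decompose $C_n=C_{p_1^{\alpha_1}}\times\cdots\times C_{p_k^{\alpha_k}}$ into its primary cyclic factors and apply the coprime (equality) case of Lemma~\ref{sumdirect} to write $\psi(C_n)=\prod_{i=1}^k \psi(C_{p_i^{\alpha_i}})$. A direct summation of $p^{j}\phi(p^{j})$ over $0\leq j\leq \alpha$ is a short geometric-series manipulation that gives the closed form $\psi(C_{p^\alpha})=(p^{2\alpha+1}+1)/(p+1)$, hence $\psi(C_{p^\alpha})\geq \frac{p}{p+1}\,p^{2\alpha}$. Taking the product over $i$ then yields
\[
\psi(C_n)\;\geq\; n^2\prod_{i=1}^k \frac{p_i}{p_i+1}.
\]

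Combining the two bounds with the hypothesis $\psi(G)>\frac{r}{s}\psi(C_n)$ produces
\[
\frac{n^2}{[G:\langle x\rangle]}\;\geq\;\psi(G)\;>\;\frac{r}{s}\cdot n^2\prod_{i=1}^k\frac{p_i}{p_i+1},
\]
and dividing through by $n^2$ and inverting gives exactly the claimed estimate on $[G:\langle x\rangle]$. The computation is essentially elementary, so there is no serious obstacle; the step worth guarding is the closed-form evaluation of $\psi(C_{p^\alpha})$ and the verification that both chained inequalities are strict (or at least that one is strict), so that the strict hypothesis on $\psi(G)$ indeed yields a strict conclusion on the index.
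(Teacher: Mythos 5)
Your proof is correct and is essentially the argument of the cited source (the present paper only quotes the lemma from \cite{Baniasad} without reproving it): bound $\psi(G)\leq n\cdot o(x)$ via an element of maximal order, and bound $\psi(C_n)$ from below using $\psi(C_{p^{\alpha}})=\frac{p^{2\alpha+1}+1}{p+1}\geq\frac{p}{p+1}p^{2\alpha}$ together with multiplicativity on coprime factors. The chain $\frac{n^2}{[G:\langle x\rangle]}\geq\psi(G)>\frac{r}{s}\psi(C_n)\geq\frac{r}{s}n^2\prod_i\frac{p_i}{p_i+1}$ is strict in the middle, so the conclusion follows as you state.
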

\begin{lemma} \cite[Lemmas 2.4 and 2.5]{herzog2018two} \label{herzog}
Let  $n = {p_1}^{\alpha_1} {p_2}^{\alpha_2} \cdots {p_r}^{\alpha_r}$  be a positive integer,
 where $p_{i}$ are primes, $p_1 < p_2 < \cdots < p_r = p$ and $\alpha_i>0$, for each $1 \leq i \leq r$.
 If $p \geq 11$, then
%(i)	If $ 1 \leq r \leq 5$ and $p_r = p > 11$, then	$\psi(C_n) \geq \dfrac{5005}{1152} \dfrac{n^2}{p+1}$.(ii)	If $r\geq 6$, then
	$$\psi(C_n) \geq \dfrac{385}{96} \dfrac{n^2}{p+1}.$$
\end{lemma}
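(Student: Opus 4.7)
The plan is to exploit that $\psi$ is multiplicative on cyclic groups of coprime order. By Lemma \ref{sumdirect} (equality case), writing $n=p_1^{\alpha_1}\cdots p_r^{\alpha_r}$ yields
\[
\psi(C_n) \;=\; \prod_{i=1}^{r}\psi\bigl(C_{p_i^{\alpha_i}}\bigr),
\]
so I would reduce the problem to understanding $\psi(C_{q^{\alpha}})$ for a prime power. A direct summation over divisors,
\[
\psi(C_{q^{\alpha}}) \;=\; \sum_{k=0}^{\alpha} q^{k}\varphi(q^{k}) \;=\; 1 + (q-1)\sum_{k=1}^{\alpha} q^{2k-1} \;=\; \frac{q^{2\alpha+1}+1}{q+1},
\]
gives the clean lower bound $\psi(C_{q^{\alpha}})\geq \dfrac{q}{q+1}\,q^{2\alpha}$.

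Combining these, and keeping the factor $(p+1)$ in the denominator from the top prime $p=p_r$ rather than absorbing it, I would obtain
\[
\psi(C_n)\;\geq\;\frac{p\cdot p^{2\alpha_r}}{p+1}\prod_{i=1}^{r-1}\frac{p_i}{p_i+1}\,p_i^{2\alpha_i}\;=\;\frac{n^{2}}{p+1}\cdot p\prod_{i=1}^{r-1}\frac{p_i}{p_i+1}.
\]
The lemma therefore reduces to the purely number-theoretic inequality
\[
p\prod_{i=1}^{r-1}\frac{p_i}{p_i+1}\;\geq\;\frac{385}{96}
\]
for any increasing sequence of primes $p_1<\cdots<p_{r-1}<p_r=p$ with $p\geq 11$.

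The main work, and the only real obstacle, will be verifying this last inequality; everything before is routine. The key observation is that the left side is minimized, for fixed $r$, when $p_1,\dots,p_{r-1}$ are the smallest primes below $p$, and that $\tfrac{2}{3}\cdot\tfrac{3}{4}\cdot\tfrac{5}{6}\cdot\tfrac{7}{8}=\tfrac{35}{96}$, so the extremal configuration $\{2,3,5,7,11\}$ already gives $11\cdot\tfrac{35}{96}=\tfrac{385}{96}$ (with equality precisely when $n=2^{\alpha_1}3^{\alpha_2}5^{\alpha_3}7^{\alpha_4}11^{\alpha_5}$ and the ``$+1$'' in the formula for $\psi(C_{p^\alpha})$ vanishes in the limit). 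For any larger prime $p=q_k$ with $k\geq 6$, I would show monotonicity: if $a_k := q_k\prod_{i<k}\tfrac{q_i}{q_i+1}$ along the full prime sequence, then
\[
\frac{a_{k+1}}{a_k}\;=\;\frac{q_{k+1}}{q_k+1}\;\geq\;\frac{q_k+2}{q_k+1}\;>\;1
\]
(since $q_k$ is odd for $k\ge 2$, forcing $q_{k+1}\geq q_k+2$), so $a_k\geq a_5=385/96$; dropping primes only increases the bound. This completes the proof.
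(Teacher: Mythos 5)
The paper does not prove this lemma itself --- it is quoted from Herzog--Longobardi--Maj \cite[Lemmas 2.4 and 2.5]{herzog2018two} --- so there is no internal proof to compare against; your argument is correct and is essentially the standard one from that source: multiplicativity of $\psi$ on coprime factors, the exact value $\psi(C_{q^{\alpha}})=\frac{q^{2\alpha+1}+1}{q+1}\geq\frac{q}{q+1}q^{2\alpha}$, and the numerical minimization showing $p\prod_{q<p}\frac{q}{q+1}$ is increasing in $p$ with value exactly $\frac{385}{96}$ at $p=11$. All the computations check out (in particular $11\cdot\frac{2}{3}\cdot\frac{3}{4}\cdot\frac{5}{6}\cdot\frac{7}{8}=\frac{385}{96}$ and $\frac{q_{k+1}}{q_k+1}>1$), so the proposal is a complete, self-contained proof of the cited lemma.
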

%%%%%%%%%%%%%%%%%%%%%%%%%%
%%%%%%%%%%%%%%%%%%%%%%%%%%
%\begin{notation} \cite[Notation 2.1 and Notation 2.2]{herzog2018two} 	Let $ \{ q_1, q_2, q_3, \cdots \} $ be the set of \textbf{all} primes in an increasing order:  	$2 = q_1 < q_2 < q_3 < \cdots $. Let also $q_0 = 1$. 	If $r, s$ are two  positive integers, we define the functions $f(r)$ and $h(s)$ as follows: 	\[f(0)=1, \quad f(r)=\prod_{i=1}^{r} \dfrac{q_i}{q_i+1}; \qquad h(1)=2, \quad h(s)=f(s-1)q_s.\] \end{notation}

\section{\bf  Supersolvability}
\subsection{Proof of [T]-Conjecture}
%	We prove this conjecture by induction on $|\pi(G)|$.
	 Let $G$ be a group of order $n = {p_1}^{\alpha_1} {p_2}^{\alpha_2} \cdots {p_r}^{\alpha_r}$,
	where $p_{1}, p_{2}, \dots , p_{r}$ are primes, $p_1 < p_2 < \cdots < p_r = p$ such that $\alpha_i>0$, for each $1 \leq i \leq r$.
	 By assumption,
	 $\frac{31}{77}\psi(C_n)<\psi(G)$.
	 By induction on $|\pi(G)|$, we prove that $G$ is a supersolvable group.
	 
	 If $|\pi(G)|=1$, then $G$ is a $p$-group and so $G$ is supersolvable.
Assume that $|\pi(G)|\geq 2$ and the theorem holds for each group $H$ such that $|\pi(H)|<|\pi(G)|$. Now we consider the following two cases:

\textbf{Case (I)}
 If $G$ has a  normal cyclic Sylow subgroup $Q$, then
by Lemma \ref{sumsylow}  we have
$\psi(G) \leq \psi(Q)\psi(G/Q)$. Using Lemma \ref{sumdirect} and the assumptions we have
\[\dfrac{31}{77} \psi(C_{|G/Q|})\psi(C_{|Q|})=\dfrac{31}{77} \psi(C_n)<\psi(G)\leq\psi(Q)\psi(G/Q)=\psi(C_{|Q|})\psi(G/Q).\]
Therefore $\frac{31}{77}\psi(C_{|G/Q|})<\psi(G/Q)$ and $|\pi(G/Q)|<|\pi(G)|$. By the inductive hypothesis, $G/Q$ is supersolvable
and so $G$ is a supersolvable group.

\textbf{Case (II)} Let $G$ have no normal cyclic Sylow  subgroup.

If  $p\geq11$, then  by Lemma \ref{herzog}, we have 
\[\psi(G) > \dfrac{31}{77}\psi(C_n) \geq \dfrac{31}{77} \cdot \dfrac{385}{96} \dfrac{n^2}{p+1}. \]
Thus	there exists $x \in G$ such that $o(x)>\frac{31}{77} \cdot\frac{385}{96} \frac{n}{p+1}$. Therefore
\[[G:\langle x \rangle] < \dfrac{77}{31} \cdot \dfrac{96}{385} (p+1) \leq \dfrac{77}{31} \cdot \dfrac{96}{385} \cdot \dfrac{12}{11} p< 0.68p<p, \]
which is a  contradiction,  by Lemma \ref{2p}.
Therefore $\pi(G) \subseteq \{2, 3, 5, 7\}$, where $ 2 \leq|\pi(G)| \leq 4$. 
Now we consider the following cases:
\\
\textbf{Case 1.} Let $\pi(G)=\{2, 3\}$. Then $|G|=2^{\alpha_1}3^{\alpha_2}$. In this case we have
\begin{align} \label{sum23}
\psi(G)>\dfrac{31}{77}\psi(C_{|G|})>\dfrac{31}{77}\cdot \dfrac{2^{2{\alpha_1}+1}}{2+1} \cdot
\dfrac{3^{2{\alpha_2}+1}}{3+1} =\dfrac{31}{77} \cdot \dfrac{1}{2}n^2.
\end{align}
It follows  that there exists $x\in G $ such that $o(x)>\frac{31}{77} \cdot \frac{1}{2}n$. We conclude that
$|G:\langle x \rangle|<\frac{77}{31} \cdot  2 < 4.97$. By Lemma \ref{2p}, we have $[G:\langle x \rangle]=3$ or $4$.  Let $ H={\rm core}_G(\langle x \rangle)$.
	\begin{itemize}
	\item Let $[G:\langle x \rangle]=3$. 
	 By Lemma \ref{lucchini}, $[\langle x \rangle : H] < [G : \langle x \rangle] = 3$.
	Therefore $G/H$ is  supersolvable and so $G$ is a supersolvable group.
	\item Let $[G:\langle x \rangle]=4$.
	 By Lemma \ref{lucchini}, $[\langle x \rangle : H] < [G : \langle x \rangle] = 4$. 
	If $G/H$ is a supersolvable group, then we get the result. Let $G/H$ be non-supersolvable. Therefore $ [\langle{x}\rangle: H] =  3$,  $|G/H|=12$ and so $G/H\cong A_4$.
	\\	
	$\blacktriangleright$
	If $2$ divides $|H|$, then there
	exists
	a characteristic subgroup $M$ in $H$ such that $|H:M|=2$.
	Thus $G/M$ is a non-supersolvable group of order $24$. Therefore using the list of
	such groups ($\rm{SL}(2,3), S_4, C_2 \times A_4$) and their
	$\psi$-values ($99, 67, 87$), we have $\psi(G/M) \leq 99$. By Lemma \ref{prop} we have
	$\psi(G) \leq \psi(G/M)|M|^2 \leq 99 (n/24)^2$.
	Using (\ref{sum23}), $	\psi(G)>\frac{31}{77}\cdot  \frac{1}{2} n^2$. Therefore
	\[\dfrac{31}{77}\cdot  \dfrac{1}{2} n^2< 99 \dfrac{n^2}{24^2},\]
	which is a contradiction.	
	\\	
	$\blacktriangleright$
	If $2 \nmid |H|$, then $|G|=2^2 3^{\beta}$.
	Let $\beta \geq 2$. Then there exists a characteristic subgroup $M$ in $H$ such that $|H:M|=3$. Therefore 
	$M \unlhd G$ and $|G/M|=36$. 
	Thus $G/M$ is a non-supersolvable group of order $36$. Therefore using the list of
	such groups ($(C_2 \times C_2) : C_9, (C_3 \times C_3) : C_4, C_3 \times A_4$) and their
	$\psi$-values ($265, 115, 121$), we have $\psi(G/M) \leq 265$. By Lemma \ref{prop} we have
	$\psi(G) \leq \psi(G/M)|M|^2 \leq 265 (n/36)^2=265 \cdot 3^{2\beta-4}$.
	On the other hand, we have	$\psi(G) > \frac{31}{77} \psi(C_n)$. Thus
	$$\psi(G) > \frac{31}{77} \psi(C_4) \psi(C_{3^{\beta}})=\frac{31}{77} \cdot  11 \cdot \dfrac{3^{2\beta+1}+1}{4}. $$
	Therefore
	\begin{align*}
	\frac{31}{77} \cdot  11 \cdot \dfrac{3^{2\beta+1}+1}{4}<265 \cdot 3^{2\beta-4} &\Rightarrow
		\frac{31}{28}  \cdot 3^{2\beta+1}<265 \cdot 3^{2\beta-4} \\
		&\Rightarrow  31 \cdot 3^5 < 28 \cdot 265,
	\end{align*}
	which is a contradiction. Thus $\beta = 1$ and so $|H|=1$ and  $G\cong A_4$, which is a contradiction, since $\psi(A_4)=31$.	
	\end{itemize}
\textbf{Case 2.} Let $\pi(G)=\{2, 3, 5\}$. Then by Lemma \ref{aval},
there exists $x\in G $ such that 
$[G:\langle x \rangle]<\frac{77}{31} \cdot \frac{12}{5}<5.97$.
 By Lemma \ref{2p}, we have $[G:\langle x \rangle]=5$.
 By Lemma \ref{lucchini}, $[\langle x \rangle : {\rm core}_G(\langle x \rangle)] < [G : \langle x \rangle] = 5$.
 We have $|G/{\rm core}_G(\langle x \rangle)|= 5, 10, 15$ or $20$. Therefore
 $G/{\rm core}_G(\langle x \rangle)$
  is  supersolvable and so $G$ is a supersolvable group.
\\
\textbf{Case 3.} Let $\pi(G)=\{2, 3, 5, 7\}$. Then by Lemma \ref{aval},
	there exists $x\in G $ such that 
	$[G:\langle x \rangle]<\frac{77}{31} \cdot \frac{96}{35}<6.9<7$,
		which is impossible.
		\\
\textbf{Case 4.} Let $\pi(G)=\{2, 5\}$ or $\pi(G)=\{ 3, 5\}$. Then by Lemma \ref{aval}, there exists $x \in G$ such that
 $[G:\langle x \rangle]<5$, which is a contradiction \\
\textbf{Case 5.} Let $\pi(G)=\{2, 7\}$, $\pi(G)=\{3, 7\}$, $\pi(G)=\{5, 7\}$, $\pi(G)=\{2, 3, 7\}$ or $\pi(G)=\{2, 5, 7\}$. Then by Lemma \ref{aval}, there exists $x \in G$ such that
$[G:\langle x \rangle]<7$ and we get a contradiction. \\
 The proof is now complete.

\subsection{\bf Equality condition of [T]-Conjecture}
\begin{lemma} \label{bahh}
	Let $G$ be a non-supersolvable group of order $n$ and $p \mid n$. Furthermore,	 let
	 $\psi(G) =\frac{31}{77}\psi(C_n)$ and $P \in {\rm Syl}_p(G)$ is cyclic and normal in $G$. Then $P$ has a normal $p$-complement $K$ in $G$, where $\psi(K) = \frac{31}{77} \psi(C_{|K|})$.
\end{lemma}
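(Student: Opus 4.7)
The plan is to use Lemma \ref{sumsylow} to transfer the equality hypothesis from $G$ to the quotient $G/P$, apply the just-proved [T]-Conjecture to force an equality there, and then exploit the equality clause of Lemma \ref{sumsylow} to conclude that $P\leq Z(G)$, which yields the required direct decomposition.

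Concretely, since $P$ is cyclic and normal, Lemma \ref{sumsylow} gives $\psi(G)\leq\psi(P)\psi(G/P)$. Because $\gcd(|P|,|G/P|)=1$, Lemma \ref{sumdirect} gives the factorisation $\psi(C_n)=\psi(C_{|P|})\psi(C_{|G/P|})$, and note $\psi(P)=\psi(C_{|P|})$. Plugging in $\psi(G)=\tfrac{31}{77}\psi(C_n)$ and cancelling $\psi(C_{|P|})$ yields the inequality $\tfrac{31}{77}\psi(C_{|G/P|})\leq\psi(G/P)$.

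The crux is to show that $G/P$ is itself non-supersolvable, so that the contrapositive of the [T]-Conjecture (just established) forces $\psi(G/P)\leq\tfrac{31}{77}\psi(C_{|G/P|})$, hence equality throughout. To see the non-supersolvability of $G/P$, I would argue that the characteristic subgroups of the cyclic group $P$ form a chain $1=P_0<P_1<\cdots<P_k=P$ with each $P_i$ normal in $G$ (being characteristic in $P\trianglelefteq G$) and each quotient $P_i/P_{i-1}$ of prime order; splicing this chain together with a supersolvable normal series of $G/P$ would display $G$ as supersolvable, contradicting the hypothesis. Once equality $\psi(G)=\psi(P)\psi(G/P)$ is in hand, the equality clause of Lemma \ref{sumsylow} delivers $P\leq Z(G)$.

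With $P$ central, Schur--Zassenhaus provides a complement $K$ to the normal Hall subgroup $P$, and triviality of the conjugation action of $K$ on $P$ promotes the semidirect product to the direct product $G=P\times K$; then $K$ is the unique Hall $p'$-subgroup, hence characteristic in $G$, and in particular a normal $p$-complement. Finally, applying Lemma \ref{sumdirect} once more gives $\psi(G)=\psi(P)\psi(K)$, while the hypothesis reads $\psi(G)=\tfrac{31}{77}\psi(C_{|P|})\psi(C_{|K|})$; cancelling $\psi(P)=\psi(C_{|P|})$ produces $\psi(K)=\tfrac{31}{77}\psi(C_{|K|})$, as required. The only step requiring any real care is the verification that $G/P$ is non-supersolvable; every other ingredient is a direct appeal to a lemma from the preceding list.
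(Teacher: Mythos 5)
Your proposal is correct and follows essentially the same route as the paper: both derive $\psi(G/P)\geq\frac{31}{77}\psi(C_{|G/P|})$ from Lemma \ref{sumsylow}, force equality via the [T]-Conjecture (you spell out the lifting of supersolvability along the cyclic normal $P$, which the paper leaves implicit), and then use the equality clause to get $P\leq Z(G)$. The only cosmetic difference is that you invoke Schur--Zassenhaus plus centrality where the paper cites Burnside's normal $p$-complement theorem; both are immediate once $P\leq Z(G)$.
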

\begin{proof}
 By Lemma \ref{sumsylow}, we have
\begin{align} \label{tttttt}
 \psi(P )\psi(G/P )\geq
 \psi(G) = \dfrac{31}{77}
\psi(C_n) = \dfrac{31}{77}
\psi(C_{|P|})\psi(C_{|G/P|}).\end{align}
Therefore
\[ \psi(G/P ) \geq \dfrac{31}{77}
\psi(C_{|G/P|}).\]
Now by the non-supersolvability of G and the above result, we conclude that
 \[\psi(G/P ) =
\dfrac{31}{77}
\psi(C_{|G/P|}).\]
 It follows that the equality holds in (\ref{tttttt}). Thus $P \leq Z(G)$, by Lemma \ref{sumsylow}.
Therefore by Burnside’s normal $p$-complement theorem, there exists $K \unlhd G$ such that
$G = P \times K.$
\end{proof}

\begin{theorem}
	Suppose that $G$ is a non-supersolvable group of order $n$ such that
	$\psi(G) =\frac{31}{77}\psi(C_n)$.
	Then $G = A_4 \times C_m$, where $(6, m) = 1$.
\end{theorem}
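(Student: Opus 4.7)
The plan is to induct on $|G|$, using Lemma \ref{bahh} to strip off any cyclic normal Sylow subgroup of $G$ and reduce to the case handled by Case (II) of the proof of the [T]-Conjecture, where $G$ has no such subgroup.

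For the inductive step, suppose $G$ has a cyclic normal Sylow $p$-subgroup $P$. Lemma \ref{bahh} then yields $G = P \times K$ with $K$ a normal $p$-complement satisfying $\psi(K) = \frac{31}{77}\psi(C_{|K|})$. Since $P$ is cyclic (hence supersolvable) and $G$ is not, $K$ must also be non-supersolvable. Applying the inductive hypothesis to $K$ gives $K \cong A_4 \times C_{m'}$ with $\gcd(6, m') = 1$. Because $|P| = p^{\alpha}$ is coprime to $|K| = 12m'$, we have $p \notin \{2, 3\}$ and $p \nmid m'$; hence $P \times C_{m'}$ is cyclic of order $p^{\alpha} m'$ with $\gcd(6, p^{\alpha} m') = 1$, giving $G \cong A_4 \times C_{p^{\alpha} m'}$ as required.

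For the base case, assume $G$ has no cyclic normal Sylow subgroup. I would retrace Case (II) of the [T]-Conjecture proof almost verbatim, now under the hypothesis $\psi(G) = \frac{31}{77}\psi(C_n)$ together with non-supersolvability. The crucial observation is that every estimate invoked in Case (II) is strict in its favored direction: Lemma \ref{aval} remains strict in its conclusion once one notes that a non-supersolvable $G$ cannot be cyclic, so $\psi(G) < |G| \cdot \max_{x \in G} o(x)$; Lemma \ref{herzog} is strict; and the bound $\psi(C_n) > n^2/2$ for $\pi(G) = \{2, 3\}$ is strict. Consequently every numerical contradiction derived in Case (II) survives, and every subcase concluding ``$G$ is supersolvable'' now contradicts the non-supersolvability of $G$. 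The only surviving branch is the terminal step of Case 1: $\pi(G) = \{2, 3\}$, $[G : \langle x \rangle] = 4$, $2 \nmid |H|$, $\beta = 1$, which forces $|G| = 12$ and $G \cong A_4$. Since $\psi(A_4) = 31 = \frac{31}{77}\psi(C_{12})$, equality is attained and the base case $G \cong A_4 \cong A_4 \times C_1$ is confirmed.

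The main obstacle I anticipate is the careful bookkeeping of strict inequalities so that every contradiction in Case (II) persists after replacing the strict hypothesis by equality, in particular in the two subbranches of Case 1 where the argument compares $\psi(G) > \frac{31}{77} \cdot \frac{1}{2} n^2$ against bounds derived from quotients by characteristic subgroups of $H$. Once this is settled, the direct product structure emerges naturally from the induction: each stripped cyclic Sylow contributes a factor $C_{p^{\alpha}}$ whose order is coprime to $12$ and to the orders of the previously stripped factors, so they amalgamate with the $C_{m'}$ produced at the base case into a single cyclic factor $C_m$ with $\gcd(6, m) = 1$, yielding $G \cong A_4 \times C_m$.
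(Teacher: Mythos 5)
Your proposal is correct and follows essentially the same strategy as the paper: Lemma \ref{bahh} is used to split off cyclic normal Sylow subgroups as direct factors, and the residual case with no such subgroup is settled by rerunning Case (II) of the [T]-Conjecture argument, whose estimates all survive the passage from strict inequality to equality and whose only non-contradictory branch yields $A_4$. The only organizational difference is that you induct on $|G|$ while the paper inducts on the largest prime divisor of $n$.
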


\begin{proof}
	Let $p$ be the largest prime
	divisor of $n$. 	We prove the result by induction on $p$.
	By the assumption, $|\pi(G)| \geq 2$, and so $p \geq 3$.	
	\begin{enumerate}
		\item Let $p=3$.\\
		We note that $G$ has no normal cyclic Sylow subgroup.		
		There exists $x\in G $ such that 	$|G:\langle x \rangle|<\frac{77}{31} \cdot  2 < 4.97$. By Lemma \ref{2p}, we have $[G:\langle x \rangle]=3$ or $4$. 
		If $[G:\langle x \rangle]=3$, then $G$ is a supersolvable group, which is a contradiction.
		Therefore $[G:\langle x \rangle]=4$.
		We have $G/{\rm core}_G(\langle x \rangle)\cong A_4$, since  $G$ is  non-supersolvable.
		If ${\rm core}_G(\langle x \rangle)\neq 1$, then similarly to Case 1 in the proof of [T]-Conjecture we get a contradiction.
		Therefore ${\rm core}_G(\langle x \rangle) = 1$ and so  $G\cong A_4$.
		\item Let $p=5$.
		\begin{enumerate}
			\item Let $\pi(G)=\{2, 3, 5\}$.\\
			 Then by Lemma \ref{aval},
			there exists $x\in G $ such that 
			$[G:\langle x \rangle]<\frac{77}{31} \cdot \frac{12}{5}<5.97<10$.			
			By Lemma \ref{2p}, 
			$G$ has a normal cyclic Sylow $5$-subgroup or  $[G:\langle x \rangle]=5$.\\
			$\bullet$ Let $G$ has a normal cyclic Sylow $5$-subgroup, say $P_5$.	
			Using Lemma \ref{bahh}, there exists $K \unlhd G$ such that $G = P_5 \times K$ and
			$\psi(K) = \frac{31}{77} \psi(C_{|K|})$.
			We notice that $K$ is non-supersolvable and $\pi(K) = \{2, 3\}$. 
			By the above case, we have
			  $K = A_4$ and $G = A_4 \times P_5$.\\
			$\bullet$ Let $[G:\langle x \rangle]=5$. Then $|G/{\rm core}_G(\langle x \rangle)|= 5, 10, 15$ or $20$. Therefore
			$G/{\rm core}_G(\langle x \rangle)$
			is  supersolvable and so $G$ is a supersolvable group, which is a contradiction.
			\item 
			Let $\pi(G)=\{2, 5\}$ or $\pi(G)=\{ 3, 5\}$. Then  there exists $x \in G$ such that
			$[G:\langle x \rangle]<5$. 	By Lemma \ref{2p}, 
			$G$ has a normal cyclic Sylow $5$-subgroup. Thus $G$ is a supersolvable group, which is a contradiction.
		\end{enumerate}
	\item 	Let $p=7$. \\ Then  there exists $x \in G$ such that
	$[G:\langle x \rangle]<7$. 	By Lemma \ref{2p}, 
	$G$ has a normal cyclic Sylow $7$-subgroup.
	Using Lemma \ref{bahh}, there exists $K \unlhd G$ such that $G = P_7 \times K$ and
	$\psi(K) = \frac{31}{77} \psi(C_{|K|})$.
	We notice that $K$ is non-supersolvable and 
	$\pi(K) = \{2, 3\}$ or $\{2, 3, 5\} $, therefore by the above cases we have
	$G\cong P_7 \times A_4$ or $P_7 \times P_5 \times A_4$, respectively and we get the result.
	
	\item Let   $p\geq11$. Then  	there exists $x \in G$ such that 
	\[[G:\langle x \rangle] < \dfrac{77}{31} \cdot \dfrac{96}{385} (p+1) \leq \dfrac{77}{31} \cdot \dfrac{96}{385} \cdot \dfrac{12}{11} p< 0.68p<p, \]
	By Lemma \ref{2p}, 
	$G$ has a normal cyclic Sylow $p$-subgroup, say $P$.
	Using Lemma \ref{bahh}, there exists $K \unlhd G$ such that $G = P \times K$ and
	$\psi(K) = \frac{31}{77} \psi(C_{|K|})$.
	Then $K$ is non-supersolvable and 
	$\pi(K) = \pi(G) \setminus \{p\}$.
	Therefore using the above discussion and the induction hypothesis
	 we get the result.
	\end{enumerate}
\end{proof}
\subsection{Supersolvability for group of odd order}
\begin{notation}
	Let $ \{ q_1, q_2, q_3, \cdots \} $ be the set of \textbf{all} primes in an increasing order: 
	$2 = q_1 < q_2 < q_3 < \cdots $. Let also $q_0 = 1$.
	If $r, s$ are two  positive integers,
	we define the functions $f^{\prime}(r)$ and $h^{\prime}(s)$ as follows:
	\[f^{\prime}(1)=1,  f^{\prime}(r)=\prod_{i=2}^{r} \dfrac{q_i}{q_i+1}; \] %%=\dfrac{3}{2}f(r)
	\[ h^{\prime}(2)=3,  h^{\prime}(s)=f^{\prime}(s-1)q_s.\]   %%=\dfrac{3}{2}f(s-1)q_s
\end{notation}
In the  sequel,  let 
$A:= {\rm SmallGroup}(75,2)=(C_5 \times C_5) : C_3$. We know that
$\psi(A)=271$ and $\psi(C_{75})=3647$. Similarly to the proof of Lemmas 2.4 and 2.5 in \cite{herzog2018two} we get that:
\begin{lemma} \label{herzoggg}
	Let  $n = {p_1}^{\alpha_1} {p_2}^{\alpha_2} \cdots {p_r}^{\alpha_r}$  be a positive integer,
	where $p_{i}$ are primes, $2<p_1 < p_2 < \cdots < p_r = p$ and $\alpha_i>0$, for each $1 \leq i \leq r$.
	If $p \geq 37=q_{12}$, then $$\psi(C_n) \geq h^{\prime}(12)  \dfrac{n^2}{p+1}.$$
\end{lemma}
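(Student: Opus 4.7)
The plan is to prove this by first reducing to a clean lower bound for $\psi(C_n)$ in terms of $\prod_i p_i/(p_i+1)$, and then performing a monotonicity argument in the number $r$ of prime divisors. The starting point is the multiplicativity of $\psi$ on cyclic groups of coprime orders (Lemma \ref{sumdirect}), which gives $\psi(C_n) = \prod_{i=1}^r \psi(C_{p_i^{\alpha_i}})$, combined with the closed form $\psi(C_{p^k}) = (p^{2k+1}+1)/(p+1) \geq p^{2k+1}/(p+1)$. Multiplying these bounds yields
\[ \psi(C_n) \;\geq\; n^2 \prod_{i=1}^{r} \frac{p_i}{p_i+1} \;=\; \frac{n^2\,p}{p+1} \prod_{i=1}^{r-1} \frac{p_i}{p_i+1}. \]

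Since $n$ is odd, we have $p_i \geq q_{i+1}$ for every $i$ (the $p_i$ are distinct odd primes in increasing order, so the $i$-th one is at least the $(i+1)$-st prime). Because $x \mapsto x/(x+1)$ is increasing, this gives
\[ \prod_{i=1}^{r-1} \frac{p_i}{p_i+1} \;\geq\; \prod_{i=2}^{r} \frac{q_i}{q_i+1} \;=\; f^{\prime}(r), \]
so $\psi(C_n) \geq p\, f^{\prime}(r) \cdot n^2/(p+1)$. The target bound is $\psi(C_n) \geq h^{\prime}(12)\, n^2/(p+1) = 37\, f^{\prime}(11) \cdot n^2/(p+1)$, so it suffices to show $p\, f^{\prime}(r) \geq 37\, f^{\prime}(11)$.

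Here I would split into two cases. If $r \leq 11$, then $f^{\prime}$ being a product of terms less than $1$ gives $f^{\prime}(r) \geq f^{\prime}(11)$, and combined with $p \geq 37$ the inequality is immediate. If $r \geq 12$, the inequality $p_i \geq q_{i+1}$ applied to $i = r$ gives $p \geq q_{r+1}$, so it is enough to check $q_{r+1}\,\prod_{j=12}^{r} q_j/(q_j+1) \geq 37$ for every $r \geq 12$. Pushing $r$ to $r+1$ multiplies the left side by $q_{r+2}/(q_{r+1}+1)$, which is strictly greater than $1$ since consecutive odd primes differ by at least $2$; so the sequence is increasing in $r$, and I only need to verify the base case $r=12$, where $q_{13} \cdot q_{12}/(q_{12}+1) = 41 \cdot 37/38 > 37$.

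The only delicate point is keeping the indexing straight ($p_i \geq q_{i+1}$ rather than $q_i$, because $q_1 = 2$ is excluded for odd $n$) and the base case computation; the monotonicity argument then automatically handles arbitrarily large $p$. Everything else is a direct transcription of the ideas in \cite{herzog2018two}, with the shift from $p \geq 11$ to $p \geq 37$ reflecting the exclusion of the prime $2$ and the resulting improved constant $h^{\prime}(12)$ in place of $385/96$.
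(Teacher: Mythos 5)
Your argument is correct and is exactly the adaptation the paper has in mind: the paper gives no details, simply asserting the lemma "similarly to the proof of Lemmas 2.4 and 2.5 in Herzog--Longobardi--Maj," and your proof (multiplicativity of $\psi$ on $C_n$, the bound $\psi(C_{p^k})\geq p^{2k+1}/(p+1)$, the shift $p_i\geq q_{i+1}$ for odd $n$, and the monotonicity check with base case $q_{13}\cdot q_{12}/(q_{12}+1)=41\cdot 37/38>37$) is precisely that argument with the prime $2$ excluded. No gaps; the indexing and the two cases $r\leq 11$, $r\geq 12$ are handled correctly.
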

The proof of Theorem \ref{oddsuper}  is very similar to the proof of [T]-conjecture, and so we
remove the details of the proof.
% go through the argument giving somewhat less detail.
\begin{proof}[Proof of Theorem \ref{oddsuper}]
	We prove the result by induction on $|\pi(G)| $.
	The assertion is trivial if $G$ is a $p$-group, and 
	let the result holds for each group $H$, where $|\pi(H)|<|\pi(G)|$ (exactly similar to the proof of [T]-Conjecture).

	We consider the following two cases:
	
	\textbf {Case(I)}
	If $G$ has a  normal cyclic Sylow subgroup $Q$, then we get the result.
	
	%by Lemma \ref{sumsylow}  we have $\psi(G) \leq \psi(Q)\psi(G/Q)$. Using Lemma \ref{sumdirect} and the assumptions we have \[\frac{271}{3647} \psi(C_{|G/Q|})\psi(C_{|Q|})=\frac{271}{3647} \psi(C_n)<\psi(G)\leq\psi(Q)\psi(G/Q)=\psi(C_{|Q|})\psi(G/Q).\] Therefore $\frac{271}{3647}\psi(C_{|G/Q|})<\psi(G/Q)$ and $|\pi(G/Q)|<|\pi(G)|$. By the inductive hypothesis, $G/Q$ is supersolvable and so $G$ is a supersolvable group. 

	\textbf{Case (II)} Let $G$ have no normal cyclic Sylow  subgroup.
	\\
	Let $p$ be the largest prime divisor of $|G|$.
	If  $p\geq37$, then  by Lemma \ref{herzoggg}, we have 
	\[\psi(G) > \frac{271}{3647}\psi(C_n) \geq \frac{271}{3647} \cdot h^{\prime}(12) \dfrac{n^2}{p+1}. \]
	Thus	there exists $x \in G$ such that $o(x)>\frac{271}{3647} \cdot h^{\prime}(12) \frac{n}{p+1}$. Therefore
	\[[G:\langle x \rangle] < \dfrac{3647}{271} \cdot \dfrac{1}{h^{\prime}(12)} (p+1) \leq
	\dfrac{3647}{271} \cdot \dfrac{1}{h^{\prime}(12)} \cdot \dfrac{38}{37} p <p, \]
	which is a contradiction.

	Therefore $\pi(G) \subseteq \{3, 5, 7, \cdots, 31\}$ and so  $ 2 \leq|\pi(G)| \leq 10$. 
	Now we consider the following cases:
	\\
	\textbf{Case 1.}
	Let $p=31$, i.e. $\{31\} \subseteq \pi(G)  \subseteq \{3, 5, 7, \cdots, 31\}$. 
	
	Using Lemma \ref{aval}, there exists $x \in G$ such that
	$[G:\langle x \rangle] <  2p=62$. By Lemma \ref{2p}, we have $[G:\langle x \rangle]=31$. 
	Using Lemma \ref{lucchini}, $[\langle x \rangle : {\rm core}_G(\langle x \rangle)] < [G : \langle x \rangle] = 31$.
	We see that $G/{\rm core}_G(\langle x \rangle)$ is  supersolvable and so $G$ is a supersolvable group.
	\\
	\textbf{Case 2.} Let $p \in \{17, 19, 23, 29\}$. Similarly to  Case 1 we obtain that
	$G$ is a supersolvable group.
	
	Therefore $\pi(G) \subseteq \{3, 5, 7, 11, 13\}$, where $ 2 \leq|\pi(G)| \leq 5$.
	\\
	\textbf{Case 3.} Let $p=13$. 
	If $5\nmid |G|$, then
	by Lemma \ref{aval},
	there exists $x\in G $ such that 
	$[G:\langle x \rangle]<25$. Thus by Lemma \ref{2p} we have
	$[G:\langle x \rangle]=13$, and we get the result similarly to the above.
	If $5 \in \pi(G)$, 
	by Lemma \ref{aval},
	there exists $x\in G $ such that 
	$[G:\langle x \rangle]<29$, and by Lemma \ref{2p} we have
	$[G:\langle x \rangle]=13$ or $27$.
	
	If $[G:\langle x \rangle]=13$, then by Lemma \ref{lucchini} we have $|G/{\rm core}_G(\langle x \rangle)|=13m$, where
	$m \in \{3, 9, 5, 7, 11\}$. Therefore $G/{\rm core}_G(\langle x \rangle)$ is a supersolvable group and so $G$ is a supersolvable group.

	If $[G:\langle x \rangle]=27$, then 
	by Lemma \ref{lucchini} we have $|\langle x \rangle:{\rm core}_G(\langle x \rangle)|<27$.
	If $13 \mid |\langle x \rangle:{\rm core}_G(\langle x \rangle)| $, then $|G/{\rm core}_G(\langle x \rangle)|=13 \cdot 27$. Let $P_5 \in {\rm Syl}_5({\rm core}_G(\langle x \rangle))$. We have $P_5 \in {\rm Syl}_5(G)$ and $P_5 \unlhd G$, which is a contradiction.
	Therefore $13 \nmid |\langle x \rangle:{\rm core}_G(\langle x \rangle)| $.
	Let $P_{13} \in {\rm Syl}_{13}({\rm core}_G(\langle x \rangle))$. We have $P_{13} \in {\rm Syl}_{13}(G)$ and $P_{13} \unlhd G$, which is a contradiction.
	\\
	\textbf{Case 4.} Let $p=11$.\\
	$\bullet$ If $\pi(G)=\{3, 5, 7, 11\},$ then there exists 
	$x \in G$ such that
	$[G:\langle x \rangle]<26$. Therefore $[G:\langle x \rangle]=11$ or $25$. \\
	$\blacktriangleright$ If $[G:\langle x \rangle]=11$, then we have the result.\\
	$\blacktriangleright$ Let $[G:\langle x \rangle]=25$. Then $|G/{\rm core}_{G}\langle x \rangle|=25m$, where $m \in \{3, 5, 7, 9, 11, 15, 21\}$.
	If $m=11$, then we get the result. Otherwise, $G$ has a Sylow $11$-subgroup which is cyclic and normal, and this is impossible.
	\\
	$\bullet$ If $\pi(G) \subsetneq \{3, 5, 7, 11\}$, then there exists 
	$x \in G$ such that
	$[G:\langle x \rangle]<24$. Therefore $[G:\langle x \rangle]=11$ and so we have the result.\\
	\textbf{Case 5.} Let $p=7$.\\
	$\bullet$ If $\pi(G)=\{3, 7\}$ or $\{5, 7\}$, then easily we can see that $G$ is supersolvable.\\
	$\bullet$ If $\pi(G)=\{3, 5, 7\}$, then there exists 
	$x \in G$ such that
	$[G:\langle x \rangle]<25$. Therefore $[G:\langle x \rangle]=7, 15$ or $21$. \\
	$\blacktriangleright$ If $[G:\langle x \rangle]=7$, then we get the result.\\
	$\blacktriangleright$ Let $[G:\langle x \rangle]=15$. Then $|G/{\rm core}_{G}(\langle x \rangle)|=15m$, where $m \in \{3, 5, 7, 9\}$.
	If $m=7$, then we get the result. Otherwise, the Sylow $7$-subgroup of $G$  is cyclic and normal, which is impossible.\\
	$\blacktriangleright$ Let $[G:\langle x \rangle]=21$. Then $|G/{\rm core}_{G}\langle x \rangle|=21m$, where $m \in \{3, 5, 7, 9, 15\}$.
	If $m\in \{5, 15\}$, then we get the result. Otherwise,  the Sylow $5$-subgroup of $G$ is cyclic and normal, which is impossible.
	\\
	\textbf{Case 6.} Let $\pi(G)=\{3, 5\}$.  Then $|G|=3^{\alpha_1}5^{\alpha_2}$. In this case we have
	\begin{align} \label{sum35}
	\psi(G)>\frac{271}{3647}\psi(C_{|G|})>\frac{271}{3647}\cdot \dfrac{3^{2{\alpha_1}+1}}{3+1} \cdot
	\dfrac{5^{2{\alpha_2}+1}}{5+1} = \frac{271}{3647} \cdot \dfrac{5}{8}n^2.
	\end{align}
	Hence
	there exists $x\in G $ such that 
	$|G:\langle x \rangle|<22$. By Lemma \ref{2p}, we have $[G:\langle x \rangle]=5$ or $15$. 
	If $[G:\langle x \rangle]=5$, then we have the result. Let $[G:\langle x \rangle]=15$ and $ H={\rm core}_G(\langle x \rangle)$.
	By Lemma \ref{lucchini}, $[\langle x \rangle : H] < [G : \langle x \rangle] = 15$. 
	If $G/H$ is a supersolvable group, then we get the result. Let $G/H$ be non-supersolvable. Therefore $ [\langle{x}\rangle: H] =  5$ and $|G/H|=75$. Thus $G/H\cong A=(C_5 \times C_5) : C_3$.
	\\	
	$\blacktriangleright$
	If $3$ divides $|H|$, then there exists
	a characteristic subgroup
	$M$ in $H$ such that $|H:M|=3$. Therefore 
	$M \unlhd G$ and $|G/M|=225$. 
	Thus $G/H$ is a non-supersolvable group of order $225$. Therefore using the list of
	such groups ($(C_5 \times C_5) : C_9, C_3 \times ((C_5 \times C_5) : C_3)$) and their
	$\psi$-values ($2197, 1297$), we have $\psi(G/H) \leq 2197$. By Lemma \ref{prop} we have
	$\psi(G) \leq \psi(G/H)|H|^2 \leq 2197 (n/225)^2$.
	Using (\ref{sum35}), $	\psi(G)>\frac{271}{3647} \cdot  \frac{271}{3647} \cdot \dfrac{5}{8}n^2$. Therefore
	\[\frac{271}{3647} \cdot \dfrac{5}{8}n^2< 2197 (n/225)^2,\]
	which is a contradiction.	
	\\	
	$\blacktriangleright$
	If $3 \nmid |H|$, then $|G|=3 \cdot 5^{\beta}$.
	Let $\beta > 2$.
	Then there exists
	a characteristic subgroup
	$M$ in $H$ such that $|H:M|=5$.
	% Then there exists subgroup $M$ such that $|H:M|=3$ and $M { \rm ch } H$. 
	Therefore 
	$M \unlhd G$ and $|G/M|=375$. 
	Thus $G/H$ is a non-supersolvable group of order $375$. Therefore using the list of
	such groups ($((C_5 \times C_5) : C_5) : C_3, C_5 \times ((C_5 \times C_5) : C_3)$) and their
	$\psi$-values ($3771, 3771$), we have $\psi(G/H) = 3771$. By Lemma \ref{prop} we have
	$\psi(G) \leq \psi(G/H)|H|^2 \leq 3771 (n/375)^2$.
	Using (\ref{sum35}), we have 
	\[\frac{271}{3647} \cdot \dfrac{5}{8} < 3771 (n/375)^2.\]
	which is a contradiction. Thus $\beta = 2$ and so $G\cong A=(C_5 \times C_5) : C_3$, which is a contradiction.	
\end{proof}

\section{\bf  Counterexample for  [HLM]-Conjecture}
%We see that for proof by induction on $|G:H|$. If $|G:H|=1$, then we get the result. First if $H$ is not maximal in $G$, then there exists subgroup $M$ such that $ H \subsetneqq K \subsetneqq G. $ Since $|G:M|<|G:H|$ and $|M:H|<|G:H|$, therefore \[ \psi(G) \leq \psi(M) |G:M|^2, \psi(M) \leq \psi(H)|M:H|^2. \] Thus  \[ \psi(G) \psi(M)  \leq \psi(M) |G:M|^2  \psi(H)|M:H|^2. \] and $\psi(G)   \leq   \psi(H)|G:H|^2$. 

Using the notation in GAP, let $G={\rm SmallGroup}(32,7) \times C_m \cong ((C_8 : C_2) : C_2) \times C_m$, where $(m,2)=1$. 
We know that ${\rm SmallGroup}(32,7)$ has a  maximal subgroup $M\cong C_2 \times D_8$. Therefore
$G$ has a maximal subgroup $H=M \times C_m \cong C_2 \times D_8 \times C_m$.
Using GAP we have
$\psi({\rm SmallGroup}(32,7))=167$ and $\psi(M)=39$ and so $\psi(M) |G:M|^2=39\cdot4=156$.
Now note that
\[\psi(G)=\psi({\rm SmallGroup}(32,7)) \psi(C_m)= 167 \psi(C_m) \nleqq \psi(H)|G:H|^2= 156 \psi(C_m). \]
%The above example show that Conjucture II in not valid.
Therefore this example shows that [HLM]-Conjecture is not satisfied in general. Obviously this is a natural question to ask with which extra conditions the validity of [HLM]-Conjecture is obtained.

\end{document}